\newtheorem{theorem}{Theorem}[section]
\theoremstyle{plain}
\newtheorem{corollary}[theorem]{Corollary}
\newtheorem{example}{Example}[section]
\newtheorem{lemma}{Lemma}[section]
\newtheorem{remark}[theorem]{Remark}
\numberwithin{equation}{section}
\begin{document}
\title[Inclusion relations]{$(\alpha ,\beta ,\lambda ,\delta ,m,\Omega
)_{p}- $Neighborhood for some families of analytic and multivalent functions}
\author{Halit ORHAN}
\address{Department of Mathematics, Faculty of Science, Ataturk University,
25240 Erzurum, Turkey.}
\email{orhanhalit607@gmail.com}
\subjclass{30C45}
\keywords{$p-$valent functions, inclusion relations, neighborhood
properties, Salagean differential operator, Miller and Mocanu's lemma.}

\begin{abstract}
In the present investigation, we give some interesting results related with
neighborhoods of $\ p-$valent functions. Relevant connections with some
other recent works are also pointed out.
\end{abstract}

\maketitle

\section{\textbf{INTRODUCTION AND\ DEFINITIONS}}

Let $A$ demonstrate the family of functions $f(z)$ \ of the form

\begin{equation*}
f(z)=z+\sum\limits_{n=2}^{\infty }{a_{n}z^{n}}\text{ \ \ \ }
\end{equation*}

which are analytic in the open unit disk $\mathcal{U}=\left\{ {z\in 
%TCIMACRO{\U{2102} }%
%BeginExpansion
\mathbb{C}
%EndExpansion
:\left\vert z\right\vert <1}\right\} .$

We denote by $\mathcal{A}_{p}(n)$ the class of functions $f(z)$ \ normalized
by 
\begin{equation}
f(z)=z^{p}+\sum\limits_{k=n}^{\infty }{a_{k+p}z^{k+p}}\text{ \ \ \ }(n,p\in 
\mathbb{N}:=\left\{ 1,2,3,...\right\} )
\end{equation}%
which are analytic and $p$-valent in $\mathcal{U}$.

Upon differentiating both sides of (1.1) $m$ times with respect to $z,$ we
have

\begin{equation}
f^{(m)}(z)=\frac{p!}{(p-m)!}z^{p-m}+\sum\limits_{k=n}^{\infty }\frac{(k+p)!}{%
(k+p-m)!}{a_{k+p}z^{k+p-m}}
\end{equation}%
\begin{equation*}
(n,p\in 
%TCIMACRO{\U{2115} }%
%BeginExpansion
\mathbb{N}
%EndExpansion
;m\in 
%TCIMACRO{\U{2115} }%
%BeginExpansion
\mathbb{N}
%EndExpansion
_{0}:=%
%TCIMACRO{\U{2115} }%
%BeginExpansion
\mathbb{N}
%EndExpansion
\cup \{0\};p>m).
\end{equation*}

\bigskip We show by $\mathcal{A}_{p}(n,m)$ the class of functions of the
form (1.2) which are analytic and $p$-valent in $\mathcal{U}$.

The concept of neighborhood for $f(z)\in \mathcal{A}$ was first given by
Goodman \cite{Good}. The concept of $\delta $-neighborhoods $N_{\delta }(f)$
of analytic functions $f(z)\in \mathcal{A}$ was first studied by Ruscheweyh 
\cite{Rusch}. Walker \cite{Walk}, defined a neighborhood of analytic
functions having positive real part. Later, Owa \textit{et al}.\cite%
{OwaSaNuno} generalized the results given by Walker. In 1996, Alt\i nta\c{s}
and Owa \cite{AltOwa} gave $(n,\delta )$-neighborhoods for functions $%
f(z)\in \mathcal{A}$ with negative coefficients. In 2007, $(n,\delta )$%
-neighborhoods for $p$-valent functions with negative coefficients were
considered by Srivastava\ \textit{et al}. \cite{SriOr}, and Orhan \cite{Or}.
Very recently, Orhan \textit{et al.}\cite{OrKadOwa}, introduced a new
definition of $(n,\delta )$-neighborhood for analytic functions $f(z)\in 
\mathcal{A}.$ Orhan \textit{et al.}'s \cite{OrKadOwa} results were
generalized for the functions $f(z)\in \mathcal{A}$ and $f(z)\in \mathcal{A}%
_{p}(n)$ by many author (see, \cite{SagKam,AltunOwaKam,Fra,KayKazOwa}).

In this paper, we introduce the neighborhoods $(\alpha ,\beta ,\lambda ,m,{%
\delta ,\Omega )}_{p}-N(g)$ and $(\alpha ,\beta ,\lambda ,m,{\delta ,\Omega )%
}_{p}-M(g)$ of a function $f^{(m)}(z)$ when $f(z)\in \mathcal{A}_{p}(n).$

\bigskip Using the Salagean derivative operator \cite{Sa}; we can write the
following equalities for the function $f^{(m)}(z)$ given by

\begin{equation*}
D^{0}f^{(m)}(z)=f^{(m)}(z)
\end{equation*}

\begin{equation*}
D^{1}f^{(m)}(z)=\frac{z}{(p-m)}\left( f^{(m)}(z)\right) ^{\prime }=\frac{p!}{%
(p-m)!}z^{p-m}+\sum\limits_{k=n}^{\infty }\frac{(k+p-m)(k+p)!}{(p-m)(k+p-m)!}%
{a_{k+p}z^{k+p-m}}
\end{equation*}

\begin{equation*}
D^{2}f^{(m)}(z)=D(Df^{(m)}(z))=\frac{p!}{(p-m)!}z^{p-m}+\sum\limits_{k=n}^{%
\infty }\frac{(k+p-m)^{2}(k+p)!}{(p-m)^{2}(k+p-m)!}{a_{k+p}z^{k+p-m}}
\end{equation*}

\begin{equation*}
.........................................................
\end{equation*}

\begin{equation*}
D^{\Omega }f^{(m)}(z)=D(D^{\Omega -1}f^{(m)}(z))=\frac{p!}{(p-m)!}%
z^{p-m}+\sum\limits_{k=n}^{\infty }\frac{(k+p-m)^{\Omega }(k+p)!}{%
(p-m)^{\Omega }(k+p-m)!}{a_{k+p}z^{k+p-m}.}
\end{equation*}

We define $\tciFourier :\mathcal{A}_{p}(n,m)\rightarrow \mathcal{A}_{p}(n,m)$
such that

\begin{equation*}
\tciFourier (f^{(m)}(z))=(1-\lambda )\left( D^{\Omega }f^{(m)}(z)\right) +%
\frac{\lambda z}{(p-m)}\left( D^{\Omega }f^{(m)}(z)\right) ^{\prime }\text{ }
\end{equation*}

\begin{equation}
=\frac{p!}{(p-m)!}z^{p-m}+\sum\limits_{k=n}^{\infty }\frac{%
(k+p)!(k+p-m)^{\Omega }(1+\lambda k(p-m)^{-1})}{(p-m)^{\Omega }(k+p-m)!}%
a_{k+p}z^{k+p-m}
\end{equation}

\begin{equation*}
\text{\ }(0\leq \lambda \leq 1;\text{ }\Omega ,m\in 
%TCIMACRO{\U{2115} }%
%BeginExpansion
\mathbb{N}
%EndExpansion
_{0};\text{ }p>m).
\end{equation*}

Let $\tciFourier (\lambda ,m,\Omega )$ denote class of functions of the form
(1.3) which are analytic in $\mathcal{U}$.

For $f,g\in \tciFourier (\lambda ,m,\Omega )$, $f$ said to be $(\alpha
,\beta ,\lambda ,m,{\delta ,\Omega )}_{p}-$neighborhood for $g$ if it
satisfies

\begin{equation*}
\left\vert \frac{e^{i\alpha }\tciFourier ^{\prime }(f^{\left( m\right) }(z))%
}{z^{p-m-1}}-\frac{e^{i\beta }\tciFourier ^{\prime }(g^{(m)}(z))}{z^{p-m-1}}%
\right\vert <\delta \text{ \ \ }(z\in \mathcal{U)}
\end{equation*}

for some\textit{\ }$-\pi \leq \alpha -\beta \leq \pi $ \textit{and} \ \ ${%
\delta >}\frac{p!}{(p-m-1)!}\sqrt{2[1-\cos (\alpha -\beta )]}.$ We show this
neighborhood by $(\alpha ,\beta ,\lambda ,m,{\delta ,\Omega )}_{p}-N(g).$

Also, we say that $f\in (\alpha ,\beta ,\lambda ,m,{\delta ,\Omega )}%
_{p}-M(g)$ if it satisfies

\begin{equation*}
\left\vert \frac{e^{i\alpha }\tciFourier (f^{\left( m\right) }(z))}{z^{p-m}}-%
\frac{e^{i\beta }\tciFourier (g^{(m)}(z))}{z^{p-m}}\right\vert <\delta \text{
\ \ }(z\in \mathcal{U)}
\end{equation*}

for some\textit{\ }$-\pi \leq \alpha -\beta \leq \pi $ \textit{and} \ \ ${%
\delta >}\frac{p!}{(p-m-1)!}\sqrt{2[1-\cos (\alpha -\beta )]}.$

We give some results for functions belonging to $(\alpha ,\beta ,\lambda ,m,{%
\delta ,\Omega )}_{p}-N(g)$ and $(\alpha ,\beta ,\lambda ,m,{\delta ,\Omega )%
}_{p}-M(g).$

\section{Main Results}

Now we can establish our main results.

\begin{theorem}
If $f\in \tciFourier (\lambda ,m,\Omega )$ satisfies%
\begin{equation*}
\ \ \ \sum\limits_{k=n}^{\infty }\frac{(k+p-m)^{\Omega +1}(k+p)!(1+\lambda
k(p-m)^{-1})}{(p-m)^{\Omega }(k+p-m)!}\left\vert e^{i\alpha }{a_{k+p}-e}%
^{i\beta }b{_{k+p}}\right\vert
\end{equation*}%
\begin{equation}
{\leq }{\delta -}\frac{p!}{(p-m-1)!}\sqrt{2[1-\cos (\alpha -\beta )]}
\end{equation}

\textit{for some }$-\pi \leq \alpha -\beta \leq \pi ;$ $p>m$ \textit{and} \
\ ${\delta >}\frac{p!}{(p-m-1)!}\sqrt{2[1-\cos (\alpha -\beta )]},$ \textit{%
then}\ $\ f\in (\alpha ,\beta ,\lambda ,m,{\delta ,\Omega )}_{p}-N(g).$
\end{theorem}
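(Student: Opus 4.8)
The plan is to verify the defining inequality of the neighborhood $(\alpha ,\beta ,\lambda ,m,{\delta ,\Omega )}_{p}-N(g)$ directly, by computing $\tciFourier^{\prime }(f^{(m)}(z))/z^{p-m-1}$ from the series representation (1.3) and then estimating the resulting difference with the triangle inequality. First I would differentiate (1.3) term by term. The leading coefficient $p!/(p-m)!$ of $z^{p-m}$ produces, after differentiation, the factor $(p-m)\cdot p!/(p-m)! = p!/(p-m-1)!$ multiplying $z^{p-m-1}$, while each summand acquires a factor $(k+p-m)$ from the exponent of $z^{k+p-m}$. Dividing through by $z^{p-m-1}$ gives
\[
\frac{\tciFourier^{\prime }(f^{(m)}(z))}{z^{p-m-1}}=\frac{p!}{(p-m-1)!}+\sum_{k=n}^{\infty }\frac{(k+p)!(k+p-m)^{\Omega +1}(1+\lambda k(p-m)^{-1})}{(p-m)^{\Omega }(k+p-m)!}\,a_{k+p}\,z^{k},
\]
so the extra factor $(k+p-m)$ raises $(k+p-m)^{\Omega }$ to $(k+p-m)^{\Omega +1}$, producing precisely the coefficient $A_{k}$ appearing in hypothesis (2.1). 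The expression for $g$ has the identical structure with $b_{k+p}$ in place of $a_{k+p}$.

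Next I would form the weighted difference occurring in the definition of $N(g)$. The constant terms combine into $\tfrac{p!}{(p-m-1)!}(e^{i\alpha }-e^{i\beta })$, and the series terms into $\sum_{k\geq n}A_{k}(e^{i\alpha }a_{k+p}-e^{i\beta }b_{k+p})z^{k}$. Applying the triangle inequality together with the elementary identity $|e^{i\alpha }-e^{i\beta }|=\sqrt{2[1-\cos (\alpha -\beta )]}$, which follows by expanding $(\cos \alpha -\cos \beta )^{2}+(\sin \alpha -\sin \beta )^{2}$, bounds the modulus of the difference by
\[
\frac{p!}{(p-m-1)!}\sqrt{2[1-\cos (\alpha -\beta )]}+\sum_{k=n}^{\infty }A_{k}\left\vert e^{i\alpha }a_{k+p}-e^{i\beta }b_{k+p}\right\vert \,|z|^{k}.
\]

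Finally, since $z\in \mathcal{U}$ forces $|z|<1$ and hence $|z|^{k}<1$, I would discard the factors $|z|^{k}$ to bound the sum by $\sum_{k\geq n}A_{k}\left\vert e^{i\alpha }a_{k+p}-e^{i\beta }b_{k+p}\right\vert$, and then invoke (2.1), which asserts exactly that this sum is at most $\delta -\tfrac{p!}{(p-m-1)!}\sqrt{2[1-\cos (\alpha -\beta )]}$. The two square-root terms cancel, leaving the bound $\delta$, which is the desired conclusion $f\in (\alpha ,\beta ,\lambda ,m,{\delta ,\Omega )}_{p}-N(g)$. The computation is essentially routine; the only points demanding care are matching the differentiated coefficient with the one in (2.1) and justifying the \emph{strict} inequality $<\delta$. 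The strictness comes from $|z|^{k}<1$ when the series part is nonzero, and from the standing hypothesis $\delta >\tfrac{p!}{(p-m-1)!}\sqrt{2[1-\cos (\alpha -\beta )]}$ in the degenerate case where all the coefficient differences $e^{i\alpha }a_{k+p}-e^{i\beta }b_{k+p}$ vanish.
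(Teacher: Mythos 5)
Your proposal is correct and follows essentially the same route as the paper's own proof: differentiate (1.3), divide by $z^{p-m-1}$, apply the triangle inequality together with $|e^{i\alpha}-e^{i\beta}|=\sqrt{2[1-\cos(\alpha-\beta)]}$ and $|z|^{k}<1$, then invoke (2.1). Your closing remark about where the strictness of the final inequality comes from (in particular the degenerate case handled by the standing hypothesis on $\delta$) is a point the paper glosses over, but the argument is otherwise identical.
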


\begin{proof}
\bigskip By virtue of (1.3), we can write%
\begin{equation*}
\left\vert \frac{e^{i\alpha }\tciFourier ^{\prime }(f^{(m)}(z))}{z^{p-m-1}}-%
\frac{e^{i\beta }\tciFourier ^{\prime }(g^{(m)}(z))}{z^{p-m-1}}\right\vert =
\end{equation*}%
\begin{eqnarray*}
&&\left\vert \frac{p!(p-m)}{(p-m)!}e^{i\alpha }+e^{i\alpha
}\sum\limits_{k=n}^{\infty }\frac{(k+p-m)^{\Omega +1}(k+p)!(1+\lambda
k(p-m)^{-1})}{(p-m)^{\Omega }(k+p-m)!}{a_{k+p}z^{k}-}\frac{p!(p-m)}{(p-m)!}%
e^{i\beta }\right. \\
&&\left. -e^{i\beta }\sum\limits_{k=n}^{\infty }\frac{(k+p-m)^{\Omega
+1}(k+p)!(1+\lambda k(p-m)^{-1})}{(p-m)^{\Omega }(k+p-m)!}b{_{k+p}z^{k}}%
\right\vert
\end{eqnarray*}%
\begin{eqnarray*}
&<&\frac{p!}{(p-m-1)!}\sqrt{2[1-\cos (\alpha -\beta )]} \\
&&+\sum\limits_{k=n}^{\infty }\frac{(k+p-m)^{\Omega +1}(k+p)!(1+\lambda
k(p-m)^{-1})}{(p-m)^{\Omega }(k+p-m)!}\left\vert e^{i\alpha }{a_{k+p}-e}%
^{i\beta }b{_{k+p}}\right\vert .
\end{eqnarray*}

If 
\begin{equation*}
\ \ \sum\limits_{k=n}^{\infty }\frac{(k+p-m)^{\Omega +1}(k+p)!(1+\lambda
k(p-m)^{-1})}{(p-m)^{\Omega }(k+p-m)!}\left\vert e^{i\alpha }{a_{k+p}-e}%
^{i\beta }b{_{k+p}}\right\vert
\end{equation*}%
\begin{equation*}
{\leq }{\delta -}\frac{p!}{(p-m-1)!}\sqrt{2[1-\cos (\alpha -\beta )]},
\end{equation*}

then we observe that%
\begin{equation*}
\left\vert \frac{e^{i\alpha }\tciFourier ^{\prime }(f^{(m)}(z))}{z^{p-m-1}}-%
\frac{e^{i\beta }\tciFourier ^{\prime }(g^{(m)}(z))}{z^{p-m-1}}\right\vert
<\delta \text{ \ \ }(z\in \mathcal{U)}\mathit{.}
\end{equation*}

Thus, $f\in (\alpha ,\beta ,\lambda ,m,{\delta ,\Omega )}_{p}-N(g).$ This
evidently completes the proof of Theorem 2.1.
\end{proof}

\begin{remark}
In its special case when%
\begin{equation}
m=\Omega =\lambda =\alpha =0\text{ and }\beta =\alpha ,
\end{equation}
in Theorem 2.1 yields a result given earlier by Altunta\c{s} et al. (\cite%
{AltunOwaKam} p.3, Theorem 1).
\end{remark}

We give an example for Theorem 2.1.

\begin{example}
For given 
\begin{equation*}
g(z)=\frac{p!}{(p-m)!}z^{p-m}+\sum\limits_{k=n}^{\infty }B_{k+p}(\alpha
,\beta ,\lambda ,m,{\delta ,\Omega )}z^{k+p-m}\in \tciFourier (\lambda
,m,\Omega )
\end{equation*}%
\begin{equation*}
\text{\ }(n,p\in \mathbb{N}=\left\{ 1,2,3,...\right\} ;\text{ }p>m;\text{ }%
\Omega ,m\in 
%TCIMACRO{\U{2115} }%
%BeginExpansion
\mathbb{N}
%EndExpansion
_{0})
\end{equation*}

we consider 
\begin{equation*}
f(z)=\frac{p!}{(p-m)!}z^{p-m}+\sum\limits_{k=n}^{\infty }A_{k+p}(\alpha
,\beta ,\lambda ,m,{\delta ,\Omega )}z^{k+p-m}\in \tciFourier (\lambda
,m,\Omega )
\end{equation*}%
\begin{equation*}
(n,p\in \mathbb{N}=\left\{ 1,2,3,...\right\} ;\text{ }p>m;\text{ }\Omega
,m\in 
%TCIMACRO{\U{2115} }%
%BeginExpansion
\mathbb{N}
%EndExpansion
_{0})
\end{equation*}

with 
\begin{equation*}
A_{k+p}=\frac{(p-m)^{\Omega }\{{\delta -}\frac{p!}{(p-m-1)!}\sqrt{2[1-\cos
(\alpha -\beta )]}\}(k+p-m)!(n+p-1)}{(1+\lambda k(p-m)^{-1})(k+p-m)^{\Omega
+1}(k+p-1)!(k+p)^{2}(k+p-1)}e^{-i\alpha }+{e}^{i(\beta -\alpha )}B{_{k+p}.}
\end{equation*}
\end{example}

Then we have that 
\begin{equation*}
\ \sum\limits_{k=n}^{\infty }\frac{(k+p-m)^{\Omega +1}(k+p)!(1+\lambda
k(p-m)^{-1})}{(p-m)^{\Omega }(k+p-m)!}\left\vert e^{i\alpha }A{_{k+p}-e}%
^{i\beta }B{_{k+p}}\right\vert {=}
\end{equation*}%
\begin{equation}
(n+p-1)\left( {\delta -}\frac{p!}{(p-m-1)!}\sqrt{2[1-\cos (\alpha -\beta )]}%
\right) \sum\limits_{k=n}^{\infty }\frac{1}{(k+p-1)(k+p)}.
\end{equation}

Finally, in view of the telescopic series, we obtain 
\begin{eqnarray}
\sum\limits_{k=n}^{\infty }\frac{1}{(k+p-1)(k+p)} &=&\underset{\zeta
\longrightarrow \infty }{\lim }\sum\limits_{k=n}^{\zeta }\left[ \frac{1}{%
k+p-1}-\frac{1}{k+p}\right] \\
&=&\underset{\zeta \longrightarrow \infty }{\lim }\left[ \frac{1}{n+p-1}-%
\frac{1}{\zeta +p}\right]  \notag \\
&=&\frac{1}{n+p-1}.  \notag
\end{eqnarray}

Using (2.4) in (2.3), we get 
\begin{equation*}
\sum\limits_{k=n}^{\infty }\frac{(k+p-m)^{\Omega +1}(k+p)!(1+\lambda
k(p-m)^{-1})}{(p-m)^{\Omega }(k+p-m)!}\left\vert e^{i\alpha }{A_{k+p}-e}%
^{i\beta }B{_{k+p}}\right\vert
\end{equation*}

\begin{equation*}
{=}{\delta -}\frac{p!}{(p-m-1)!}\sqrt{2[1-\cos (\alpha -\beta )]}.
\end{equation*}

\bigskip Therefore, we say that $f\in (\alpha ,\beta ,\lambda ,m,{\delta
,\Omega )}_{p}-N(g).$

Also, Theorem 1 gives us the following corollary.

\begin{corollary}
If $f\in \tciFourier (\lambda ,m,\Omega )$ satisfies%
\begin{equation*}
\sum\limits_{k=n}^{\infty }\frac{(k+p-m)^{\Omega +1}(k+p)!(1+\lambda
k(p-m)^{-1})}{(p-m)^{\Omega }(k+p-m)!}\left\vert \left\vert a{_{k+p}}%
\right\vert {-}\left\vert b{_{k+p}}\right\vert \right\vert
\end{equation*}%
\begin{equation*}
{\leq \delta -}\frac{p!}{(p-m-1)!}\sqrt{2[1-\cos (\alpha -\beta )]}
\end{equation*}

\textit{for some }$-\pi \leq \alpha -\beta \leq \pi $ \textit{and} \ \ ${%
\delta >}\frac{p!}{(p-m-1)!}\sqrt{2[1-\cos (\alpha -\beta )]},$ and $\arg
(a_{k+p})-\arg (b_{k+p})=\beta -\alpha $ $(n,p\in \mathbb{N}=\left\{
1,2,3,...\right\} ;$ $m\in 
%TCIMACRO{\U{2115} }%
%BeginExpansion
\mathbb{N}
%EndExpansion
_{0},$ $p>m),$ \textit{then}\ $\ f\in (\alpha ,\beta ,\lambda ,m,{\delta
,\Omega )}_{p}-N(g).$

\begin{proof}
By theorem (2.1), we see the inequality (2.1) which implies that $f\in
(\alpha ,\beta ,\lambda ,m,{\delta ,\Omega )}_{p}-N(g).$

Since $\arg (a_{k+p})-\arg (b_{k+p})=\beta -\alpha ,$ if $\arg
(a_{k+p})=\alpha _{k+p},$ we see $\arg (b_{k+p})=\alpha _{k+p}-\beta +\alpha
.$ Therefore,%
\begin{equation*}
e^{i\alpha }a_{k+p}-e^{i\beta }b_{k+p}=e^{i\alpha }\left\vert
a_{k+p}\right\vert e^{i\alpha _{k+p}}-e^{i\beta }\left\vert
b_{k+p}\right\vert e^{i(\alpha _{k+p}-\beta +\alpha )}=(\left\vert
a_{k+p}\right\vert -\left\vert b_{k+p}\right\vert )e^{i(\alpha _{k+p}+\alpha
)}
\end{equation*}

implies that 
\begin{equation}
\left\vert e^{i\alpha }a_{k+p}-e^{i\beta }b_{k+p}\right\vert =\left\vert
\left\vert a_{k+p}\right\vert -\left\vert b_{k+p}\right\vert \right\vert .
\end{equation}

Using (2.5) in (2.1) the proof of the corollary is complete.

Next, we can prove the following theorem.
\end{proof}
\end{corollary}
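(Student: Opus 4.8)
The plan is to obtain the Corollary as a straightforward specialization of Theorem 2.1. The hypotheses of the two results differ only in the coefficient functional inside the summation: Theorem 2.1 controls the complex modulus $\left\vert e^{i\alpha}a_{k+p}-e^{i\beta}b_{k+p}\right\vert$, whereas the Corollary controls the real modulus $\left\vert \left\vert a_{k+p}\right\vert -\left\vert b_{k+p}\right\vert \right\vert$ under the extra argument constraint $\arg(a_{k+p})-\arg(b_{k+p})=\beta-\alpha$. So the entire task is to show that, under this constraint, the two functionals coincide termwise; the rest is a direct appeal to Theorem 2.1.

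First I would pass to polar form. Writing $\alpha_{k+p}:=\arg(a_{k+p})$, the constraint forces $\arg(b_{k+p})=\alpha_{k+p}-\beta+\alpha$, so that $a_{k+p}=\left\vert a_{k+p}\right\vert e^{i\alpha_{k+p}}$ and $b_{k+p}=\left\vert b_{k+p}\right\vert e^{i(\alpha_{k+p}-\beta+\alpha)}$. Substituting these into $e^{i\alpha}a_{k+p}-e^{i\beta}b_{k+p}$, the $\beta$ in the second exponent cancels, both terms acquire the common phase $e^{i(\alpha_{k+p}+\alpha)}$, and the expression factors as $\left( \left\vert a_{k+p}\right\vert -\left\vert b_{k+p}\right\vert \right) e^{i(\alpha_{k+p}+\alpha)}$. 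Taking absolute values and using $\left\vert e^{i(\alpha_{k+p}+\alpha)}\right\vert =1$ then yields the key identity $\left\vert e^{i\alpha}a_{k+p}-e^{i\beta}b_{k+p}\right\vert =\left\vert \left\vert a_{k+p}\right\vert -\left\vert b_{k+p}\right\vert \right\vert$ for each $k\geq n$.

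With this identity in hand, the assumed inequality of the Corollary is literally the inequality (2.1), so Theorem 2.1 applies and delivers $f\in (\alpha,\beta,\lambda,m,\delta,\Omega)_{p}-N(g)$, finishing the proof. There is no genuine obstacle here: the only point that needs a moment's care is verifying that the two exponential factors really coincide, so that the difference collapses to a difference of real magnitudes times a single unit phasor rather than remaining an honest complex combination whose modulus would not simplify. Once the argument alignment is exploited in this way, the Corollary is a corollary in the strict sense, namely the restriction of Theorem 2.1 to coefficient pairs whose arguments are locked at the fixed difference $\beta-\alpha$.
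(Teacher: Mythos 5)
Your proposal is correct and follows essentially the same route as the paper's own proof: pass to polar form using the argument constraint, observe that both terms in $e^{i\alpha}a_{k+p}-e^{i\beta}b_{k+p}$ acquire the common phase $e^{i(\alpha_{k+p}+\alpha)}$ so that the modulus collapses to $\left\vert \left\vert a_{k+p}\right\vert -\left\vert b_{k+p}\right\vert \right\vert$, and then invoke Theorem 2.1.
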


\begin{theorem}
If $f\in \tciFourier (\lambda ,m,\Omega )$ satisfies%
\begin{equation*}
\sum\limits_{k=n}^{\infty }\frac{(k+p-m)^{\Omega }(k+p)!(1+\lambda
k(p-m)^{-1})}{(p-m)^{\Omega }(k+p-m)!}\left\vert e^{i\alpha }a{_{k+p}-e}%
^{i\beta }b{_{k+p}}\right\vert \leq \delta -\frac{p!}{(p-m)!}\sqrt{2[1-\cos
(\alpha -\beta )]}\text{ \ \ }\left( z\in \mathcal{U}\right) .
\end{equation*}

\textit{for some }$-\pi \leq \alpha -\beta \leq \pi ;$ $p>m$ \textit{and} \
\ ${\delta >}\frac{p!}{(p-m)!}\sqrt{2[1-\cos (\alpha -\beta )]}$ then $f\in
(\alpha ,\beta ,\lambda ,m,{\delta ,\Omega )}_{p}-M(g).$

The proof of this teorem is similar with Theorem 2.1.

\begin{corollary}
If $f\in \tciFourier (\lambda ,m,\Omega )$ satisfies%
\begin{equation*}
\sum\limits_{k=n}^{\infty }\frac{(k+p-m)^{\Omega }(k+p)!(1+\lambda
k(p-m)^{-1})}{(p-m)^{\Omega }(k+p-m)!}\left\vert \left\vert a{_{k+p}}%
\right\vert {-}\left\vert b{_{k+p}}\right\vert \right\vert \leq \delta -%
\frac{p!}{(p-m)!}\sqrt{2[1-\cos (\alpha -\beta )]}\text{ \ \ }\left( z\in 
\mathcal{U}\right) .
\end{equation*}

\textit{for some }$-\pi \leq \alpha -\beta \leq \pi ;$ $p>m$ \textit{and} \
\ ${\delta >}\frac{p!}{(p-m)!}\sqrt{2[1-\cos (\alpha -\beta )]}$ and $\arg
(a_{k+p})-\arg (b_{k+p})=\beta -\alpha ,$ then $f\in (\alpha ,\beta ,\lambda
,m,{\delta ,\Omega )}_{p}-M(g).$

Our next result as follows.
\end{corollary}
\end{theorem}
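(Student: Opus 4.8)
The final statement is the $M(g)$-analogue of Corollary 2.1, so the plan is to deduce it from Theorem 2.2 exactly as Corollary 2.1 was deduced from Theorem 2.1, the sole extra ingredient being the argument condition $\arg(a_{k+p}) - \arg(b_{k+p}) = \beta - \alpha$. (Theorem 2.2 itself is obtained by repeating the proof of Theorem 2.1 with $\tciFourier$ in place of $\tciFourier^{\prime}$ and the normalization $z^{p-m}$ in place of $z^{p-m-1}$; the constant term then contributes $\frac{p!}{(p-m)!}\left|e^{i\alpha} - e^{i\beta}\right| = \frac{p!}{(p-m)!}\sqrt{2[1-\cos(\alpha-\beta)]}$, which accounts for the weaker factor $(k+p-m)^{\Omega}$ and the revised constant appearing in its hypothesis.) Granting Theorem 2.2, it suffices to show that the argument condition forces $\left|e^{i\alpha}a_{k+p} - e^{i\beta}b_{k+p}\right| = \left||a_{k+p}| - |b_{k+p}|\right|$.

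For this key computation I would set $\arg(a_{k+p}) = \alpha_{k+p}$, so that the condition yields $\arg(b_{k+p}) = \alpha_{k+p} - \beta + \alpha$. Writing $a_{k+p} = |a_{k+p}|e^{i\alpha_{k+p}}$ and $b_{k+p} = |b_{k+p}|e^{i(\alpha_{k+p}-\beta+\alpha)}$ and substituting, the phases $e^{i\alpha}$ and $e^{i\beta}$ recombine to give
\[
e^{i\alpha}a_{k+p} - e^{i\beta}b_{k+p} = \left(|a_{k+p}| - |b_{k+p}|\right)e^{i(\alpha_{k+p}+\alpha)},
\]
whence $\left|e^{i\alpha}a_{k+p} - e^{i\beta}b_{k+p}\right| = \left||a_{k+p}| - |b_{k+p}|\right|$. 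This is precisely the identity (2.5), now applied to the $M(g)$-series; since (2.5) is purely coefficientwise, the change of the weight from $(k+p-m)^{\Omega+1}$ to $(k+p-m)^{\Omega}$ plays no role.

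Finally I would insert this identity under the summation sign of the corollary's hypothesis, which then reads exactly as the hypothesis of Theorem 2.2; invoking that theorem gives $f \in (\alpha,\beta,\lambda,m,\delta,\Omega)_p - M(g)$ and ends the proof. I expect no genuine obstacle: the argument is a single phase cancellation already performed in (2.5), and the only point that warrants a moment's care is confirming that the hypothesis $\arg(a_{k+p}) - \arg(b_{k+p}) = \beta - \alpha$ is invoked for the same coefficient pair $(a_{k+p}, b_{k+p})$ that enters the $M(g)$-series — which it is.
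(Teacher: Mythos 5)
Your proposal is correct and follows exactly the route the paper intends: the theorem is Theorem 2.1's argument rerun with $\tciFourier$ in place of $\tciFourier'$ and $z^{p-m}$ in place of $z^{p-m-1}$ (which correctly produces the constant $\frac{p!}{(p-m)!}\sqrt{2[1-\cos(\alpha-\beta)]}$ and the weight $(k+p-m)^{\Omega}$), and the corollary then follows from the same phase-cancellation identity (2.5) used for Corollary 2.3. The paper itself omits the details, saying only that the proof is ``similar with Theorem 2.1,'' so your write-up supplies precisely what is gestured at and nothing needs to be changed.
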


\begin{theorem}
If $f\in (\alpha ,\beta ,\lambda ,m,{\delta ,\Omega )}_{p}-N(g),0\leq \alpha
<\beta \leq \pi ;$ $p>m$ and $\arg (e^{i\alpha }a_{k+p}-e^{i\beta
}b_{k+p})=k\phi ,$ then 
\begin{equation*}
\sum\limits_{k=n}^{\infty }\frac{(k+p-m)^{\Omega +1}(k+p)!(1+\lambda
k(p-m)^{-1})}{(p-m)^{\Omega }(k+p-m)!}\left\vert e^{i\alpha }a{_{k+p}-e}%
^{i\beta }b{_{k+p}}\right\vert {\leq \delta -}\frac{p!}{(p-m-1)!}(\cos
\alpha -\cos \beta ).
\end{equation*}

\begin{proof}
For $f\in (\alpha ,\beta ,\lambda ,m,{\delta ,\Omega )}_{p}-N(g),$ we have%
\begin{eqnarray*}
&&\left\vert \frac{e^{i\alpha }\tciFourier ^{\prime }(f^{\left( m\right)
}(z))}{z^{p-m-1}}-\frac{e^{i\beta }\tciFourier ^{\prime }(g^{(m)}(z))}{%
z^{p-m-1}}\right\vert \\
&=&\left\vert \frac{p!(e^{i\alpha }-e^{i\beta })}{(p-m-1)!}%
+\sum\limits_{k=n}^{\infty }\frac{(k+p-m)^{\Omega +1}(k+p)!(1+\lambda
k(p-m)^{-1})}{(p-m)^{\Omega }(k+p-m)!}(e^{i\alpha }a{_{k+p}-e}^{i\beta }b{%
_{k+p})}z^{k}\right\vert
\end{eqnarray*}%
\begin{equation*}
=\left\vert \frac{p!(e^{i\alpha }-e^{i\beta })}{(p-m-1)!}+\sum%
\limits_{k=n}^{\infty }\frac{(k+p-m)^{\Omega +1}(k+p)!(1+\lambda k(p-m)^{-1})%
}{(p-m)^{\Omega }(k+p-m)!}(e^{i\alpha }a{_{k+p}-e}^{i\beta }b{_{k+p})}%
e^{ik\phi }z^{k}\right\vert <\delta .
\end{equation*}

Let us consider $z$ such that $\arg z=-\phi .$ Then $z^{k}=\left\vert
z\right\vert ^{k}e^{-ik\phi }.$ For such a point $z\in \mathcal{U},$ we see
that%
\begin{eqnarray*}
&&\left\vert \frac{e^{i\alpha }\tciFourier ^{\prime }(f(z))}{z^{p-m-1}}-%
\frac{e^{i\beta }\tciFourier ^{\prime }(g(z))}{z^{p-m-1}}\right\vert \\
&=&\left\vert \frac{p!(e^{i\alpha }-e^{i\beta })}{(p-m-1)!}%
+\sum\limits_{k=n}^{\infty }\frac{(k+p-m)^{\Omega +1}(k+p)!(1+\lambda
k(p-m)^{-1})}{(p-m)^{\Omega }(k+p-m)!}\left\vert e^{i\alpha }a{_{k+p}-e}%
^{i\beta }b{_{k+p}}\right\vert \left\vert z\right\vert ^{k}\right\vert
\end{eqnarray*}%
\begin{equation*}
=\left[ \left( \sum\limits_{k=n}^{\infty }\frac{(k+p-m)^{\Omega
+1}(k+p)!(1+\lambda k(p-m)^{-1})}{(p-m)^{\Omega }(k+p-m)!}\left\vert
e^{i\alpha }a{_{k+p}-e}^{i\beta }b{_{k+p}}\right\vert \left\vert
z\right\vert ^{k}+\frac{p!(\cos \alpha -\cos \beta )}{(p-m-1)!}\right)
^{2}\right.
\end{equation*}%
\begin{equation*}
+\left. \left( \frac{p!(\sin \alpha -\sin \beta )}{(p-m-1)!}\right) ^{2}%
\right] ^{\frac{1}{2}}<\delta .
\end{equation*}

This implies that%
\begin{equation*}
\left( \sum\limits_{k=n}^{\infty }\frac{(k+p-m)^{\Omega +1}(k+p)!(1+\lambda
k(p-m)^{-1})}{(p-m)^{\Omega }(k+p-m)!}\left\vert e^{i\alpha }a{_{k+p}-e}%
^{i\beta }b{_{k+p}}\right\vert \left\vert z\right\vert ^{k}+\frac{p!(\cos
\alpha -\cos \beta )}{(p-m-1)!}\right) ^{2}\text{ }<\delta ^{2},
\end{equation*}

or%
\begin{equation*}
\frac{p!}{(p-m-1)!}(\cos \alpha -\cos \beta )+\sum\limits_{k=n}^{\infty }%
\frac{(k+p-m)^{\Omega +1}(k+p)!(1+\lambda k(p-m)^{-1})}{(p-m)^{\Omega
}(k+p-m)!}\left\vert e^{i\alpha }a{_{k+p}-e}^{i\beta }b{_{k+p}}\right\vert
\left\vert z\right\vert ^{k}<\delta
\end{equation*}

for $z\in \mathcal{U}$. Letting $\left\vert z\right\vert \longrightarrow
1^{-},$ we have that%
\begin{equation*}
\sum\limits_{k=n}^{\infty }\frac{(k+p-m)^{\Omega +1}(k+p)!(1+\lambda
k(p-m)^{-1})}{(p-m)^{\Omega }(k+p-m)!}\left\vert e^{i\alpha }a{_{k+p}-e}%
^{i\beta }b{_{k+p}}\right\vert \leq \delta -\frac{p!}{(p-m-1)!}(\cos \alpha
-\cos \beta ).
\end{equation*}
\end{proof}
\end{theorem}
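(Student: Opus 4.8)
The plan is to run the estimate of Theorem 2.1 backwards, extracting the coefficient bound from the membership hypothesis by evaluating the defining expression along one cleverly chosen ray. First, using (1.3), I would differentiate and divide by $z^{p-m-1}$ to obtain
\[
\frac{e^{i\alpha}\tciFourier^{\prime}(f^{(m)}(z))}{z^{p-m-1}}-\frac{e^{i\beta}\tciFourier^{\prime}(g^{(m)}(z))}{z^{p-m-1}}=\frac{p!(e^{i\alpha}-e^{i\beta})}{(p-m-1)!}+\sum_{k=n}^{\infty}\Lambda_{k}\,(e^{i\alpha}a_{k+p}-e^{i\beta}b_{k+p})\,z^{k},
\]
where I abbreviate $\Lambda_{k}:=\dfrac{(k+p-m)^{\Omega+1}(k+p)!(1+\lambda k(p-m)^{-1})}{(p-m)^{\Omega}(k+p-m)!}$. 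Membership $f\in(\alpha,\beta,\lambda,m,{\delta,\Omega)}_{p}-N(g)$ asserts that this quantity has modulus $<\delta$ for every $z\in\mathcal{U}$.

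The decisive step is to use the hypothesis $\arg(e^{i\alpha}a_{k+p}-e^{i\beta}b_{k+p})=k\phi$, which permits writing $e^{i\alpha}a_{k+p}-e^{i\beta}b_{k+p}=|e^{i\alpha}a_{k+p}-e^{i\beta}b_{k+p}|\,e^{ik\phi}$. I would then restrict to the points $z=|z|e^{-i\phi}$ (i.e.\ $\arg z=-\phi$), so that $e^{ik\phi}z^{k}=|z|^{k}>0$ for every $k$ simultaneously. Along this ray all series contributions add in phase and the difference collapses to
\[
\frac{p!(e^{i\alpha}-e^{i\beta})}{(p-m-1)!}+\sum_{k=n}^{\infty}\Lambda_{k}\,|e^{i\alpha}a_{k+p}-e^{i\beta}b_{k+p}|\,|z|^{k}.
\]

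Next I would split the constant via $e^{i\alpha}-e^{i\beta}=(\cos\alpha-\cos\beta)+i(\sin\alpha-\sin\beta)$ and compute the modulus-squared, which equals
\[
\left(\frac{p!(\cos\alpha-\cos\beta)}{(p-m-1)!}+\sum_{k=n}^{\infty}\Lambda_{k}\,|e^{i\alpha}a_{k+p}-e^{i\beta}b_{k+p}|\,|z|^{k}\right)^{2}+\left(\frac{p!(\sin\alpha-\sin\beta)}{(p-m-1)!}\right)^{2}<\delta^{2}.
\]
Discarding the nonnegative second square and taking square roots --- here the assumption $0\leq\alpha<\beta\leq\pi$ is indispensable, since it forces $\cos\alpha-\cos\beta>0$, making the parenthesized real part positive so that the root is extracted with no sign ambiguity --- leaves
\[
\frac{p!(\cos\alpha-\cos\beta)}{(p-m-1)!}+\sum_{k=n}^{\infty}\Lambda_{k}\,|e^{i\alpha}a_{k+p}-e^{i\beta}b_{k+p}|\,|z|^{k}<\delta
\]
for all $|z|<1$. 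Letting $|z|\to1^{-}$ then yields the asserted inequality.

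The load-bearing ingredients are precisely the phase-alignment trick (choosing $\arg z=-\phi$ so that every term of the series becomes a nonnegative real multiple of $|z|^{k}$) and the sign analysis $\cos\alpha-\cos\beta>0$ required before taking the square root; the remainder is the computation of Theorem 2.1 read in reverse. The one point deserving care is the limit $|z|\to1^{-}$ inside an infinite series: I would justify it by first bounding each finite partial sum, where passing to the limit is immediate by continuity, and only afterwards letting the number of terms tend to infinity, which converts the strict inequality into the non-strict bound appearing in the conclusion.
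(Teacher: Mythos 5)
Your proposal is correct and follows essentially the same route as the paper's own proof: the phase-alignment choice $\arg z=-\phi$, the splitting of $e^{i\alpha}-e^{i\beta}$ into real and imaginary parts, discarding the $(\sin\alpha-\sin\beta)^{2}$ term using $\cos\alpha-\cos\beta>0$ from $0\leq\alpha<\beta\leq\pi$, and the limit $|z|\to1^{-}$ all match. Your added justification of the limit via finite partial sums is a welcome refinement the paper leaves implicit.
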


\begin{remark}
Applying the parametric substitutions listed in (2.2), Theorem 2.4\ and $\ $%
2.6\ would yield a set of known results due to Altunta\c{s} et al. (\cite%
{AltunOwaKam} p.5, Theorem 4; p.6, Theorem 7).
\end{remark}

\begin{theorem}
If $f\in (\alpha ,\beta ,\lambda ,m,{\delta ,\Omega )}_{p}-M(g),0\leq \alpha
<\beta \leq \pi $ and $\arg (e^{i\alpha }a_{k+p}-e^{i\beta }b_{k+p})=k\phi ,$
then%
\begin{equation*}
\sum\limits_{k=n}^{\infty }\frac{(k+p-m)^{\Omega }(k+p)!(1+\lambda
k(p-m)^{-1})}{(p-m)^{\Omega }(k+p-m)!}\left\vert e^{i\alpha }a{_{k+p}-e}%
^{i\beta }b{_{k+p}}\right\vert \leq \delta +\frac{p!}{(p-m-1)!}(\cos \beta
-\cos \alpha ).
\end{equation*}%
The proof of this theorem is similar with Theorem 2.6.

\begin{remark}
Taking $\lambda =\alpha =\Omega =m=0,$ $\beta =\alpha $ and $p=1,$in Theorem
2.8, we arrive at the following Theorem due to Orhan et al.\cite{OrKadOwa}.

\begin{theorem}
\bigskip If $f\in (\alpha ,\delta )-N(g)$ and $\arg (a_{n}-e^{i\alpha
}b_{n})=(n-1)\varphi $ $\ (n=2,3,4,...),$ then 
\begin{equation*}
\sum\limits_{n=2}^{\infty }n\left\vert a_{n}{-e}^{i\alpha }b{_{n}}%
\right\vert \leq \delta +\cos \alpha -1.
\end{equation*}

We give an application of following lemma due to Miller and Mocanu \cite%
{MiMo} (see also, \cite{Jack}).

\begin{lemma}
Let the function%
\begin{equation*}
w(z)=b_{n}z^{n}+b_{n+1}z^{n+1}+b_{n+2}z^{n+2}+...\text{ \ \ \ }(n\in 
\mathcal{U)}
\end{equation*}

be regular in $\mathcal{U}$ with $w(z)\neq 0,$ $(n\in \mathcal{U)}$. If $%
z_{0}=r_{0}e^{i\theta _{0}}$ $(r_{0}<1)$ and $\left\vert w(z_{0})\right\vert
=\max_{\left\vert z\right\vert \leq r_{0}}\left\vert w(z)\right\vert ,$ then 
$z_{0}w^{\prime }(z_{0})=qw(z_{0})$ where $q$ is real and $q\geq n\geq 1.$

Applying the above lemma, we derive
\end{lemma}
\end{theorem}

\begin{theorem}
If $f\in \tciFourier (\lambda ,m,\Omega )$ satisfies 
\begin{equation*}
\left\vert \frac{e^{i\alpha }\tciFourier ^{\prime }(f^{(m)}(z))}{z^{p-m-1}}-%
\frac{e^{i\beta }\tciFourier ^{\prime }(g^{(m)}(z))}{z^{p-m-1}}\right\vert
<\delta (p+n-m)-\frac{p!}{(p-m-1)!}\sqrt{2[1-\cos (\alpha -\beta )]}
\end{equation*}%
\textit{for some }$-\pi \leq \alpha -\beta \leq \pi ;$ $p>m$ \textit{and} \
\ ${\delta >}\left( \frac{p!}{(p+n-m)(p-m-1)!}\right) \sqrt{2[1-\cos (\alpha
-\beta )]},$ then 
\begin{equation*}
\left\vert \frac{e^{i\alpha }\tciFourier (f^{(m)}(z))}{z^{p-m}}-\frac{%
e^{i\beta }\tciFourier (g^{(m)}(z))}{z^{p-m}}\right\vert <\delta +\frac{p!}{%
(p-m)!}\sqrt{2[1-\cos (\alpha -\beta )]}\ \ \left( z\in \mathcal{U}\right) .
\end{equation*}

\begin{proof}
Let us define $w(z)$ by%
\begin{equation}
\frac{e^{i\alpha }\tciFourier (f^{(m)}(z))}{z^{p-m}}-\frac{e^{i\beta
}\tciFourier (g^{(m)}(z))}{z^{p-m}}=\frac{p!}{(p-m)!}(e^{i\alpha }{-e}%
^{i\beta })+\delta w(z).
\end{equation}%
Then $w(z)$ is analytic in $\mathcal{U}$\ and $w(0)=0.$ By logarithmic
differentiation, we get from (2.6) that 
\begin{equation*}
\frac{e^{i\alpha }\tciFourier ^{\prime }(f^{(m)}(z))-e^{i\beta }\tciFourier
^{\prime }(g^{(m)}(z))}{e^{i\alpha }\tciFourier (f^{(m)}(z))-e^{i\beta
}\tciFourier (g^{(m)}(z))}-\frac{p-m}{z}=\frac{\delta w\prime (z)}{\frac{p!}{%
(p-m)!}(e^{i\alpha }{-e}^{i\beta })+\delta w(z)}.
\end{equation*}%
Since 
\begin{equation*}
\frac{e^{i\alpha }\tciFourier ^{\prime }(f^{(m)}(z))-e^{i\beta }\tciFourier
^{\prime }(g^{(m)}(z))}{z^{p-m}\left( \frac{p!}{(p-m)!}(e^{i\alpha }{-e}%
^{i\beta })+\delta w(z)\right) }=\frac{p-m}{z}+\frac{\delta w\prime (z)}{%
\frac{p!}{(p-m)!}(e^{i\alpha }{-e}^{i\beta })+\delta w(z)},
\end{equation*}%
we see that 
\begin{equation*}
\frac{e^{i\alpha }\tciFourier ^{\prime }(f^{(m)}(z))}{z^{p-m-1}}-\frac{%
e^{i\beta }\tciFourier ^{\prime }(g^{(m)}(z))}{z^{p-m-1}}=\frac{p!}{(p-m-1)!}%
(e^{i\alpha }{-e}^{i\beta })+\delta w(z)\left( p-m+\frac{zw\prime (z)}{w(z)}%
\right) .
\end{equation*}%
This implies that 
\begin{equation*}
\left\vert \frac{e^{i\alpha }\tciFourier ^{\prime }(f^{(m)}(z))}{z^{p-m-1}}-%
\frac{e^{i\beta }\tciFourier ^{\prime }(g^{(m)}(z))}{z^{p-m-1}}\right\vert
=\left\vert \frac{p!}{(p-m-1)!}(e^{i\alpha }{-e}^{i\beta })+\delta
w(z)\left( p-m+\frac{zw\prime (z)}{w(z)}\right) \right\vert .
\end{equation*}%
We claim that 
\begin{equation*}
\left\vert \frac{e^{i\alpha }\tciFourier ^{\prime }(f^{(m)}(z))}{z^{p-m-1}}-%
\frac{e^{i\beta }\tciFourier ^{\prime }(g^{(m)}(z))}{z^{p-m-1}}\right\vert
<\delta (p-m+n)-\frac{p!}{(p-m-1)!}\sqrt{2[1-\cos (\alpha -\beta )]}
\end{equation*}%
in $\mathcal{U}$.

Otherwise, there exists a point $z_{0}\in \mathcal{U}$ such that $%
z_{0}w^{\prime }(z_{0})=qw(z_{0})$ (by Miller and Mocanu's Lemma) where $%
w(z_{0})=e^{i\theta }$ and $q\geq n\geq 1.$

Therefore, we obtain that 
\begin{eqnarray*}
\left\vert \frac{e^{i\alpha }\tciFourier ^{\prime }(f^{(m)}(z))}{%
z_{0}^{p-m-1}}-\frac{e^{i\beta }\tciFourier ^{\prime }(g^{(m)}(z))}{%
z_{0}^{p-m-1}}\right\vert &=&\left\vert \frac{p!}{(p-m-1)!}(e^{i\alpha }{-e}%
^{i\beta })+\delta e^{i\theta }\left( p-m+q\right) \right\vert \\
&\geq &\delta \left( p+q-m\right) -\left\vert \frac{p!}{(p-m-1)!}(e^{i\alpha
}{-e}^{i\beta })\right\vert \\
&\geq &\delta \left( p+n-m\right) -\frac{p!}{(p-m-1)!}\sqrt{2[1-\cos (\alpha
-\beta )]}.
\end{eqnarray*}

This contradicts our condition in Theorem 2.11.

Hence, there is no $z_{0}\in \mathcal{U}$ such that $\left\vert
w(z_{0})\right\vert =1.$ This means that $\left\vert w(z)\right\vert <1$ for
all$\ z\in \mathcal{U}.$

Thus, have that 
\begin{eqnarray*}
\left\vert \frac{e^{i\alpha }\tciFourier (f^{(m)}(z))}{z^{p-m}}-\frac{%
e^{i\beta }\tciFourier (g^{(m)}(z))}{z^{p-m}}\right\vert &=&\left\vert \frac{%
p!}{(p-m)!}(e^{i\alpha }{-e}^{i\beta })+\delta w(z)\right\vert \\
&\leq &\frac{p!}{(p-m)!}\left\vert e^{i\alpha }{-e}^{i\beta }\right\vert
+\delta \left\vert w(z)\right\vert \\
&<&\delta +\frac{p!}{(p-m)!}\sqrt{2[1-\cos (\alpha -\beta )]}.
\end{eqnarray*}%
Upon setting $m=0,$ $\alpha =\varphi ,\wp =\tciFourier $ and $\beta =\alpha $
in Theorem 2.11, we have the following corollary given by Sa\u{g}s\"{o}z et
al.\cite{SagKam}.
\end{proof}

\begin{proof}[Corollary 2.12]
If $f\in \wp (\Omega ,\lambda )$ satisfies 
\begin{equation*}
\left\vert \frac{e^{i\alpha }\wp ^{\prime }(f(z))}{z^{p-1}}-\frac{e^{i\beta
}\wp ^{\prime }(g(z))}{z^{p-1}}\right\vert <\delta (p+n)-p\sqrt{2[1-\cos
(\varphi -\alpha )]}
\end{equation*}%
\textit{for some }$-\pi \leq \alpha -\beta \leq \pi ;$\textit{and} \ \ ${%
\delta >}\left( \frac{p}{(p+n)}\right) \sqrt{2[1-\cos (\alpha -\beta )]},$
then%
\begin{equation*}
\left\vert \frac{e^{i\alpha }\wp (f(z))}{z^{p}}-\frac{e^{i\beta }\wp (g(z))}{%
z^{p}}\right\vert <\delta +\sqrt{2[1-\cos (\varphi -\alpha )]}\text{ \ \ }%
\left( z\in \mathcal{U}\right) .
\end{equation*}
\end{proof}
\end{theorem}
\end{remark}
\end{theorem}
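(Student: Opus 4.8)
The plan is to prove Theorem 2.11 by the auxiliary-function technique built around Miller and Mocanu's lemma. First I would introduce a function $w(z)$ implicitly through the relation
\begin{equation*}
\frac{e^{i\alpha }\tciFourier (f^{(m)}(z))}{z^{p-m}}-\frac{e^{i\beta }\tciFourier (g^{(m)}(z))}{z^{p-m}}=\frac{p!}{(p-m)!}(e^{i\alpha }-e^{i\beta })+\delta w(z),
\end{equation*}
so that the quantity whose modulus the conclusion bounds is precisely $\frac{p!}{(p-m)!}(e^{i\alpha }-e^{i\beta })+\delta w(z)$. Since both $\tciFourier (f^{(m)}(z))$ and $\tciFourier (g^{(m)}(z))$ share the leading term $\frac{p!}{(p-m)!}z^{p-m}$, dividing by $z^{p-m}$ cancels the constant part against $\frac{p!}{(p-m)!}(e^{i\alpha }-e^{i\beta })$, leaving $w$ analytic in $\mathcal{U}$ with $w(0)=0$ (its expansion starts at order $z^{n}$). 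The whole problem then reduces to establishing that $\left\vert w(z)\right\vert <1$ on $\mathcal{U}$, since that estimate, combined with the triangle inequality and the identity $\left\vert e^{i\alpha }-e^{i\beta }\right\vert =\sqrt{2[1-\cos (\alpha -\beta )]}$, yields the stated bound at once.

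The pivotal computational step is to differentiate the defining relation and re-express the \emph{derivative} quantity in terms of $w$. Clearing the denominator, differentiating, and dividing by $z^{p-m-1}$ should produce
\begin{equation*}
\frac{e^{i\alpha }\tciFourier ^{\prime }(f^{(m)}(z))}{z^{p-m-1}}-\frac{e^{i\beta }\tciFourier ^{\prime }(g^{(m)}(z))}{z^{p-m-1}}=\frac{p!}{(p-m-1)!}(e^{i\alpha }-e^{i\beta })+\delta w(z)\left( p-m+\frac{zw^{\prime }(z)}{w(z)}\right),
\end{equation*}
where the factor $p-m$ comes from differentiating $z^{p-m}$ and the logarithmic-derivative term $zw'/w$ is exactly what Miller and Mocanu's lemma controls at an extremal point. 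This identity is the bridge linking the hypothesis (a bound on the derivative quantity) to the conclusion (a bound on the function quantity).

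I would then argue by contradiction, assuming $\left\vert w(z)\right\vert$ reaches the value $1$ somewhere in $\mathcal{U}$. By Miller and Mocanu's lemma there is a point $z_{0}$ with $w(z_{0})=e^{i\theta }$ and $z_{0}w^{\prime }(z_{0})=qw(z_{0})$ for some real $q\geq n\geq 1$. Substituting into the derivative identity and applying the reverse triangle inequality gives
\begin{equation*}
\left\vert \frac{e^{i\alpha }\tciFourier ^{\prime }(f^{(m)}(z_{0}))}{z_{0}^{p-m-1}}-\frac{e^{i\beta }\tciFourier ^{\prime }(g^{(m)}(z_{0}))}{z_{0}^{p-m-1}}\right\vert \geq \delta (p+q-m)-\frac{p!}{(p-m-1)!}\sqrt{2[1-\cos (\alpha -\beta )]},
\end{equation*}
and because $q\geq n$ the right side is at least $\delta (p+n-m)-\frac{p!}{(p-m-1)!}\sqrt{2[1-\cos (\alpha -\beta )]}$, contradicting the hypothesis. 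Hence $\left\vert w(z)\right\vert <1$ throughout $\mathcal{U}$, and the conclusion follows from the reduction in the first paragraph.

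The main obstacle I anticipate is the bookkeeping in the differentiation step: one must track the factorial normalisations so that $(p-m)\,\frac{p!}{(p-m)!}=\frac{p!}{(p-m-1)!}$ and isolate the $zw'/w$ term with no sign or index slips, since every later estimate relies on this identity holding exactly. The contradiction argument itself is routine once the identity is secured; the only delicate points are applying the reverse triangle inequality in the correct direction and invoking $q\geq n$ precisely at the place where the threshold $\delta (p+n-m)$ in the hypothesis originates.
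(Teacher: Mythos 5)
Your proposal coincides with the paper's own proof of Theorem 2.11 in every essential respect: the same auxiliary function $w(z)$ defined by $\frac{e^{i\alpha }\tciFourier (f^{(m)}(z))}{z^{p-m}}-\frac{e^{i\beta }\tciFourier (g^{(m)}(z))}{z^{p-m}}=\frac{p!}{(p-m)!}(e^{i\alpha }-e^{i\beta })+\delta w(z)$, the same logarithmic-differentiation identity producing the factor $p-m+\frac{zw^{\prime }(z)}{w(z)}$, the same contradiction via Miller--Mocanu at a point where $\left\vert w\right\vert$ would reach $1$, and the same concluding triangle inequality. The argument is correct and there is no substantive difference from the paper's proof.
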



\begin{thebibliography}{99}
\bibitem{OrKadOwa} H. Orhan, E. Kad\i o\u{g}lu and S. Owa, ($\alpha ,\delta
)-$Neighborhood for certain analytic functions, International Symposium on
Geometric Function Theory and Applications (Edited by S. Owa and Y. Polato%
\u{g}lu), T. C. Istanbul Kultur University Publ., 2008, 207-213.

\bibitem{MiMo} Miller, S.S and Mocanu, P.T, Second order differential
inequalities in the complex plane, J. Math. Anal.Appl., 65, 289-305, (1978).

\bibitem{Sa} G. S. S\u{a}l\u{a}gean, \textit{Subclasses of univalent
functions}, Complex analysis --Proc. 5th Rom.-Finn. Semin., Bucharest 1981,
Part 1, Lect. Notes Math. 1013 (1983) 362-372.

\bibitem{SriOr} H. M. Srivastava and H. Orhan, Coefficient inequalities and
inclusion relations for some families of analytic and multivalent functions,
Appl. Math. Lett., 20 (2007), 686-691.

\bibitem{Or} H. Orhan, Neighborhoods of a certain class of $p$-valent
functions with negative coefficients defined by using a differential
operator, Math. Ineq. Appl. Vol. 12, Number 2, (2009), 335-349.

\bibitem{SagKam} F. Sa\u{g}s\"{o}z, and M. Kamali, $(\varphi ,\alpha ,\delta
,\lambda ,\Omega )_{p}$-Neighborhood for some classes of multivalent
functions, J. Ineq. Appl., (2013), 2013:152.

\bibitem{Good} Goodman, A.W, Univalent functions and nonanalytic curves,
Proc. Amer. Math. Soc.,8, 598-601, (1957).

\bibitem{Rusch} Ruscheweyh, S., Neighborhoods of univalent functions,Proc.
Amer. Math. Soc., 81, 521-527, (1981).

\bibitem{AltunOwaKam} F. Altunta\c{s}, S. Owa and M. Kamali, $(\alpha
,\delta )_{p}$-Neighborhood for certain class of multivalent functions,
PanAmer. Math. J., Vol. 19, (2009), No. 235-46.

\bibitem{Fra} Frasin, B. A., $(\alpha ,\beta ,\delta )$-Neighborhood for
certain analytic functions with negative coefficients, Eur. J. Pure
Appl.Math., 4(1), 14-19 (2011).

\bibitem{Jack} I. S. Jack, Functions starlikenes and convex of order $\alpha
,$ J. London Math. Soc. 2(3) \ (1971), 469-474.

\bibitem{Walk} Walker, J. B., A note on neighborhoods of analytic functions
having positive real part, Int. J. Math. Math. Sci., 13, 425--430, (1990).

\bibitem{OwaSaNuno} Owa, S., Saitoh, H. and Nunokawa, M., Neighborhoods of
certain analytic functions, Appl. Math. Lett., 6, 73-77, (1993).

\bibitem{AltOwa} Alt\i nta\c{s}, O. and Owa, S., Neighborhood for certain
analytic functions with negative coefficients, Int. J. Math. Math. Sci. 19,
797-800, (1996).

\bibitem{KayKazOwa} Kugita, K., Kuroki, K. and Owa, S., $(\alpha ,\beta )$%
-Neighborhood for functions associated with Salagean differential operator
and Alexander integral operator, Int. J. Math. Anal. Vol. 4, 2010, No. 5,
211-220.
\end{thebibliography}
\end{document}